\numberwithin{equation}{section}
\newtheorem{theorem}{Theorem}[section]
\newtheorem{corollary}[theorem]{Corollary}
\newtheorem{proposition}[theorem]{Proposition}
\newtheorem{definition}[theorem]{Definition}
\newtheorem{remark}[theorem]{Remark}
\newtheorem{example}[theorem]{Example}
\def\Hom{\operatorname {Hom}}
\def\HH{\operatorname {HH}}
\def\HP{\operatorname {HP}}
\title[Frobenius Poisson algebras]
{\bf  On  (co)homology of Frobenius Poisson algebras}
\author{Can  Zhu}
\address{College of Science, University of Shanghai for Science and Technology, Shanghai
200093, China}
\address{Department of Mathematics and Computer Science, University of Antwerp, Middelheimlaan 1,
B-2020 Antwerp, Belgium} \email{czhu@usst.edu.cn}
\author{Fred Van Oystaeyen}
\address{Department of Mathematics and Computer Science, University of Antwerp, Middelheimlaan 1,
B-2020 Antwerp, Belgium} \email{fred.vanoystaeyen@ua.ac.be}
\author{Yinhuo Zhang}
\address{Department Mathematics and Statistics, University of Hasselt, Universitaire Campus, 3590 Diepeenbeek, Belgium} \email{yinhuo.zhang@uhasselt.be}
\date{}
\begin{document}
\begin{abstract}
In this paper, we study Poisson (co)homology of a Frobenius Poisson algebra. More precisely, we show that there exists a duality between Poisson homology and Poisson cohomology of Frobenius Poisson algebras, similar to that between Hochschild homology and Hochschild cohomology of Frobenius algebras. Then we use the non-degenerated bilinear form on a unimodular Frobenius  Poisson algebra to construct a Batalin-Vilkovisky structure on the Poisson cohomology
ring making it into a Batalin-Vilkovisky algebra.
\end{abstract}
\subjclass[2010]{Primary 17B63, 17B35, 16E40}

% 16S80 Deformations of rings
% 16E40  (Co)homology of rings and algebras (e.g. Hochschild, cyclic, dihedral,
%etc.)
% 16S37 Quadratic and Koszul algebras
% 55P43 Spectra with additional structure (E1, A1, ring spectra, etc.)
% 16W50 Graded rings and modules
% 16E45 Differential graded algebras and applications

\keywords{Poisson algebra, Frobenius algebra,  Poisson (co)homology,
duality,  BV algebra}

%%% ----------------------------------------------------------------------
\maketitle
%%% ----------------------------------------------------------------------

\section{Introduction}
In 1977, A. Lichnerowicz introduced Poisson cohomology of a Poisson algebra \cite{Lic77} (see also \cite{Hue90} for an algebraic approach). This cohomology provides valuable information about the Poisson structure, such as Casimir elements, Poisson derivations, Poisson deformations and so on. It has been shown in the smooth case that  Poisson (co)homology of a Poisson algebra  has similar properties to those of Hochschild (co)homology of a deformation quantization of a Poisson algebra \cite{HKR62, Bry88, Kas88, Kon03, LR07, Dol09}.  So a natural question is whether the same holds  in the singular case as well.
The aim of this paper is to establish a duality between Poisson homology and  Poisson cohomology of Poisson algebras in the singular case and to construct a Batalin-Vilkovisky structure on the Poisson cohomology ring, which makes it a Batalin-Vilkovisky algebra.

For a Frobenius algebra $A$ with a Nakayama automorphism $\nu$, it is known that there is an isomorphism between  Hochschild homology and Hochschild cohomology:
\begin{eqnarray}
\label{eq1} \HH^i(A, A)^*\cong\HH_i(A, A^{\nu}), \forall i\in \mathbb{N},
\end{eqnarray}
(see \cite[p.120, Proposition 5.1]{CE56}, for example);
here, $*$ stands for taking the dual over $\mathbbm{k}$
and $A^{\nu}$ is an $A$-bimodule with the right action twisted by $\nu$.  In  \cite{LR09},
Launois and Richard study Poisson (co)homology of truncated polynomial algebras in two variables and obtain an isomorphism between  Poisson homology and Poisson cohomology similar to that in \eqref{eq1}, see [LR09]. Motivated by \eqref{eq1} and the results in [LR09], we obtain an isomorphism between  Poisson homology and  Poisson cohomology of Frobenius Poisson algebras as follows (Corollary \ref{dual2}):

\noindent {\bf Theorem 1.} Let $S$ be a Frobenius Poisson algebra.
%Then there exists a Poisson module $S^\sigma$ such that
Then we have the following isomorphism:
$$\HP^{i}(S, \, S)^*\cong\HP_i(S, \,S_\sigma),$$
for all $i\in \mathbb{N}$, where $S_{\sigma}$ is the Poisson module induced by the Frobenius isomorphism $\sigma: S\rightarrow S^*$.

Note that a Frobenius Poisson algebra is always symmetric as an algebra. Thus, the Nakayama automorphism is the identity map. However, $S_\sigma$ is not necessarily isomorphic to $S$ as Poisson modules. Therefore, we introduce the notion of a {\it unimodular Frobenius
Poisson algebra}, that is, a Frobenius Poisson algebra such that $S_{\sigma}$ and $S$ are isomorphic (see Section $4$ for details).

% Parallel to this, we
%aim to search Batalin-Vilkovisky structures on the Poisson cohomology of Frobenius Poisson algebras.
For a Poisson algebra $S$, we denote by $\mathfrak{X}^k(S)$ the space of all skew-symmetric $\mathbbm{k}$-linear maps $S^{\otimes k}\rightarrow S$ that are derivations in each argument, and
let $\mathfrak{X}(S):=\oplus_k\mathfrak{X}^k(S)$. There is a cochain map on $\mathfrak{X}(S)$ and the corresponding
cohomology is called the Poisson cohomology and denoted by $\HP^\bullet(S)$. It is known that $\mathfrak{X}(S)$ and the Poisson cohomology ring $\HP(S):=\oplus\HP^\bullet(S)$ are Gerstenhaber algebras \cite{LPV13}.
Moreover, a Batalin-Vilkovisky structure can be constructed on the Poisson cohomology ring of a unimodular Poisson manifold \cite{Xu99}.
Also, a Batalin-Vilkovisky structure exists on the Hochschild cohomology ring of some algebras, such as Calabi-Yau algebras \cite{Gin06}, symmetric Frobenius algebras \cite{Tra08, Men04} and preprojective algebras of Dynkin quivers \cite{ES09, Eu10}.
In the current paper, we shall use the non-degenerated bilinear form on a Frobenius Poisson algebra to prove the following (Theorem \ref{combv} and Theorem \ref{pcombv}):

\noindent {\bf Theorem 2.}\begin{itemize}
\item[(1)] Let $S$ be a Frobenius Poisson algebra with the non-degenerated bilinear form $\langle-, -\rangle: S\otimes S\rightarrow \mathbbm{k}$.
For $P\in \mathfrak{X}^m(S) (m\geq 1)$, let $\Delta(P)\in \mathfrak{X}^{m-1}(S)$, defined by the equation:
$$\langle\Delta(P)(a_1, \cdots, a_{m-1}), a_m\rangle=(-1)^{m-1}\langle P(a_1, \cdots, a_{m-1}, a_m),1\rangle.$$
Then  $(\mathfrak{X}(S), \Delta)$ is a Batalin-Vilkovisky algebra.
\item[(2)] If $S$ is a unimodular Frobenius Poisson algebra, then $(\HP(S), \Delta)$ is a Batalin-Vilkovisky algebra.
\end{itemize}

The paper is organized as follows. In Section 2, we recall some basic facts on
Poisson algebras, Poisson modules and Poisson (co)homology. In Section 3, we prove Theorem 1. In the last section,
we introduce the notion of a unimodular Poisson algebra and construct a Batalin-Vilkovisky structure on the Poisson cohomology ring of a unimodular Poisson algebra.

Throughout the paper, $\mathbbm{k}$  is a base field and all algebras are $\mathbbm{k}$-algebras; $^*$ means the dual over $\mathbbm{k}$ and the unadorned $\otimes$ stands for $\otimes_\mathbbm{k}$.

\section{Preliminaries}
This section deals with  some basic facts on
Poisson algebras, Poisson modules and Poisson (co)homology.

By definition a {\it Poisson algebra} is a commutative $\mathbbm{k}$-algebra
$S$ equipped with a bilinear map $\{-, -\}: S\otimes S\rightarrow S$
(Poisson bracket) satisfying:
\begin{itemize}
\item $\{-, -\}$ is a Lie bracket,

\item $\{-, -\}$ is a derivation in each variable.
\end{itemize}

An algebra $E$ is called a Frobenius algebra (not necessarily commutative) if there is an isomorphism $\sigma: E\rightarrow E^*$ as left
modules. Equivalently, there is a nondegenerate bilinear form, often
called Frobenius pair, $\langle-, -\rangle: E\times E\rightarrow \mathbbm{k}$
such that $\langle ab, c\rangle=\langle a, bc\rangle$ for all $a, b ,c\in E$ (where the bilinear form is defined by
$\langle a, b\rangle:=\sigma(b)(a)$). The isomorphism $\sigma$ is often called a Frobenius isomorphism.
By the nondegeneracy of the bilinear form, there exists an automorphism
$\nu$, unique up to an inner automorphism, such that
$$\langle a, b\rangle=\langle\nu(b), a\rangle$$ for all $a, b\in E$.
Thus, $\sigma$ becomes an isomorphism of $E$-bimodules $E^\nu\rightarrow E^*$.
The automorphism $\nu$ is called the {\it Nakayama automorphism}
of $E$.  For more detail, see \cite{Sm96}. The Frobenius Poisson algebras considered in this paper  are defined as follows:

\begin{definition} A Poisson algebra $S$ is called a Frobenius Poisson algebra if $S$ as an associative algebra is Frobenius.
\end{definition}

\begin{definition} \cite{Oh99}  A right Poisson module over a Poisson
algebra $S$ is a $\mathbbm{k}$-vector space $M$ endowed with two bilinear maps
$\cdot$ and $\{-, -\}_M: M\otimes S\rightarrow M$ such that
\begin{itemize}
\item[(1)] $(M, \cdot)$ is a right module over the commutative algebra $S$,

\item[(2)] $(M, \{-, -\}_M)$ is a right module over the Lie algebra $(S, \{-, -\})$,

\item[(3)] $\{xa, b\}_M=\{x, b\}_M\cdot a+x\cdot\{a, b\}$ for all $a, b\in S$ and $x\in M$,

\item[(4)] $\{x, ab\}_M=\{x, a\}_M\cdot b+\{x, b\}_M\cdot a$ for all $a, b\in S$ and $x\in M$.
\end{itemize}
\end{definition}

The notion of a left Poisson module is defined in a similar way. For a Poisson algebra $S$, the space $S$ has a
natural right (also, left) Poisson module structure.
Given two right Poisson modules $(M, \cdot, \{-, -\}_M)$ and $(N, \cdot, \{-, -\}_N)$ over $S$, a $\mathbbm{k}$-linear map
$f: M\rightarrow N$  is called
a morphism of Poisson modules if $$f(m\cdot s)=f(m)\cdot s, \quad f(\{m, s\}_M)=\{f(m), s\}_N$$
for each $m\in M$ and $s\in S$. The following two properties on Poisson modules are straightforward.

\begin{proposition} \label{dual} Suppose that $(M, \cdot, \{-, -\}_M)$ is a right  Poisson module over $S$.
Then the following actions define a left Poisson module structure on $M^*:=\Hom_\mathbbm{k}(M, \mathbbm{k})$
\begin{itemize}
\item[$\bullet$] $s\cdot \alpha: M\rightarrow \mathbbm{k}$ by $(s\cdot \alpha)(m):=\alpha(m\cdot s)$,

\item[$\bullet$] $\{s, \alpha\}_{M^*}: M\rightarrow \mathbbm{k}$ by $\{s, \alpha\}_{M^*}(m):=\alpha(\{m, s\}_{M})$
\end{itemize}
for each $s\in S$, $\alpha\in M^*$  and $m\in M$. Similarly, a left Poisson module $M$ yields a right Poisson module $M^*$.
\end{proposition}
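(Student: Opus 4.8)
The plan is to verify directly that the two prescribed operations make $M^*$ a left Poisson module, by checking the four defining axioms in the form appropriate to left modules (obtained from the right-module axioms of the Definition by putting the module argument in the second slot). The recurring mechanism is the same in every case: one evaluates both sides of the desired identity on an arbitrary $m\in M$, uses the definitions $(s\cdot\alpha)(m)=\alpha(m\cdot s)$ and $\{s,\alpha\}_{M^*}(m)=\alpha(\{m,s\}_M)$ to push everything inside a single application of $\alpha$, and is then left with an identity in $M$ that one shows is a consequence of the right Poisson module axioms for $M$ together with the antisymmetry of $\{-,-\}$ on $S$. Since $\alpha$ is arbitrary, matching the arguments of $\alpha$ suffices.

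First I would confirm that $(M^*,\cdot)$ is a left $S$-module. This is the classical contravariant duality over a commutative ring: evaluating $(ab)\cdot\alpha$ and $a\cdot(b\cdot\alpha)$ on $m$ gives $\alpha(m\cdot(ab))$ and $\alpha((m\cdot a)\cdot b)$, which agree by associativity of the right $S$-action on $M$, and $1\cdot\alpha=\alpha$ is immediate. Next I would check that $\{-,-\}_{M^*}$ makes $M^*$ a left module over the Lie algebra $(S,\{-,-\})$, i.e. $\{\{a,b\},\alpha\}_{M^*}=\{a,\{b,\alpha\}_{M^*}\}_{M^*}-\{b,\{a,\alpha\}_{M^*}\}_{M^*}$. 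Evaluating on $m$, the left side becomes $\alpha(\{m,\{a,b\}\}_M)$ and the right side becomes $\alpha\big(\{\{m,a\}_M,b\}_M-\{\{m,b\}_M,a\}_M\big)$; these coincide precisely because $(M,\{-,-\}_M)$ is a right Lie module, namely $\{m,\{a,b\}\}_M=\{\{m,a\}_M,b\}_M-\{\{m,b\}_M,a\}_M$.

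The two mixed compatibility axioms are where the bookkeeping is most delicate, and I expect the only real care to be needed with signs there. For the axiom $\{a,s\cdot\alpha\}_{M^*}=\{a,s\}\cdot\alpha+s\cdot\{a,\alpha\}_{M^*}$, evaluating on $m$ reduces the claim to the identity $\{m,a\}_M\cdot s=m\cdot\{a,s\}+\{m\cdot s,a\}_M$ in $M$; this follows by applying axiom (3) in the form $\{m\cdot s,a\}_M=\{m,a\}_M\cdot s+m\cdot\{s,a\}$ and then using the antisymmetry $\{s,a\}=-\{a,s\}$. Likewise the axiom $\{ab,\alpha\}_{M^*}=a\cdot\{b,\alpha\}_{M^*}+b\cdot\{a,\alpha\}_{M^*}$ reduces to $\{m,ab\}_M=\{m\cdot a,b\}_M+\{m\cdot b,a\}_M$; expanding the right-hand side by axiom (3) yields $\{m,b\}_M\cdot a+\{m,a\}_M\cdot b$ together with the term $m\cdot(\{a,b\}+\{b,a\})$, and the latter vanishes by antisymmetry, leaving exactly $\{m,ab\}_M$ by axiom (4).

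The anticipated obstacle is thus entirely one of signs and of pairing each dualized axiom with the correct combination of axioms (3), (4) and antisymmetry; no deeper idea is required. Finally, the dual statement, that a left Poisson module $M$ yields a right Poisson module $M^*$, is established by the symmetric computation, so I would simply remark that it follows \emph{mutatis mutandis}.
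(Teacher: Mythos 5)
Your verification is correct: the paper offers no proof of this proposition (it is dismissed as ``straightforward''), and the direct axiom-by-axiom check you carry out --- dualizing each left-module axiom, pushing everything inside $\alpha$, and reducing to axioms (2), (3), (4) for $M$ together with antisymmetry of $\{-,-\}$ --- is exactly the intended argument, with all identities and sign bookkeeping handled correctly.
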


\begin{proposition} \label{iso} Let $(N, \cdot, \{-, -\}_N)$ be a right  Poisson module over $S$ and $M$ be
a right $S$-module. Suppose that $f: M\rightarrow N$ is an isomorphism of $S$-modules. Then there exists a right Poisson module structure on $M$ given by:
$$ \{m, s\}_{M}:=f^{-1}(\{f(m), s\}_{N}) $$
for each $s\in S$ and $m\in M$, such that $f$ is an isomorphism of Poisson modules.
\end{proposition}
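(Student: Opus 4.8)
The plan is to transport the Poisson structure from $N$ to $M$ along the isomorphism $f$ and then to check that each of the four axioms in the definition of a right Poisson module is preserved under this transport. The first observation I would record is that, since $f$ is an isomorphism of $S$-modules, so is its inverse $f^{-1}$; in particular $f^{-1}(n\cdot s)=f^{-1}(n)\cdot s$ for all $n\in N$ and $s\in S$. Combined with the defining formula $\{m,s\}_M=f^{-1}(\{f(m),s\}_N)$, this immediately yields $f(\{m,s\}_M)=\{f(m),s\}_N$ and $f(m\cdot s)=f(m)\cdot s$, so that $f$ is a morphism of Poisson modules as soon as $M$ is shown to be one; being bijective, it is then an isomorphism.

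It therefore remains to verify axioms (1)--(4) for the triple $(M,\cdot,\{-,-\}_M)$. Axiom (1) holds by hypothesis, since $\cdot$ is the given right $S$-module structure on $M$. For each of the remaining axioms the strategy is uniform: rewrite every occurrence of $\{-,-\}_M$ via its definition, push $f$ through using its $S$-linearity, invoke the corresponding axiom already available for $N$, and finally apply $f^{-1}$, once more using its $S$-linearity, to return to $M$.

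Concretely, for the Lie-module axiom (2) I would compute $\{\{m,a\}_M,b\}_M=f^{-1}(\{\{f(m),a\}_N,b\}_N)$ together with the analogous expression obtained by interchanging $a$ and $b$, so that the Lie-module identity for $N$ gives the one for $M$ after applying $f^{-1}$. For axiom (3), starting from $\{xa,b\}_M=f^{-1}(\{f(x)\cdot a,b\}_N)$ and using axiom (3) for $N$ produces $f^{-1}(\{f(x),b\}_N\cdot a+f(x)\cdot\{a,b\})$, which by additivity and $S$-linearity of $f^{-1}$ equals $\{x,b\}_M\cdot a+x\cdot\{a,b\}$. Axiom (4) is handled in exactly the same way.

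Since none of these steps requires anything beyond the defining identities for $N$ and the $S$-linearity of $f^{\pm 1}$, there is no genuine obstacle to overcome; the only point demanding care is the bookkeeping of conventions --- ensuring that the sign convention chosen for the right Lie-module axiom matches the one in force for $N$, and that $f^{-1}$ is distributed over each summand before one reads off the desired identity on $M$.
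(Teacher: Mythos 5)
Your proof is correct: the paper itself offers no argument for this proposition (it is dismissed as ``straightforward'' immediately before the statement), and your transport-of-structure verification --- pulling the bracket back along $f$, then checking axioms (1)--(4) using the $S$-linearity of $f$ and $f^{-1}$ together with the corresponding axioms on $N$ --- is exactly the routine argument the authors intend. Nothing is missing; noting that a bijective morphism of Poisson modules is automatically an isomorphism closes the argument as you say.
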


Now, we recall the definition of  Poisson homology and  Poisson cohomology of a Poisson algebra. Let $S$ be a Poisson algebra.  Write $\Omega^1(S)$ for the $S$-module of K\"{a}hler differentials of $S$, and let
$\Omega^\bullet(S):=\bigwedge^\bullet_S(\Omega^1(S))$ be the graded commutative
algebra of differential forms, equipped with the de Rham differential $\mathbf{d}$:
$$\mathbf{d}(a_0da_1\wedge\cdots\wedge da_k)=da_0\wedge da_1\wedge\cdots\wedge da_k.$$
Let $M$ be a right Poisson module over $S$. Then there
is a chain complex on the $S$-module $M\otimes_S \Omega^k(S)$. The boundary
operator $\partial_k: M\otimes_S \Omega^k(S)\rightarrow
M\otimes_S \Omega^{k-1}(S)$ is defined by:
\begin{align*}
\partial_k(m\otimes & da_1\wedge\cdots \wedge da_k) = \sum_{1\leq i\leq k}(-1)^{i+1}\{m, a_i\}_M\otimes da_1\wedge\cdots\wedge \widehat{da_i}\wedge\cdots \wedge da_k +\\
 & +\sum_{1\leq i< j\leq k}(-1)^{i+j}m\otimes d\{a_i, a_j\}\wedge da_1\wedge\cdots \wedge\widehat{da_i}\wedge\cdots \wedge\widehat{da_j}\wedge\cdots \wedge da_k,
\end{align*}
where $d$ denotes the exterior differential. It is easy to see that
$\partial_k$ is well defined and that $\partial_{k-1}\partial_k=0$.
The homology of this complex is denoted by $\HP_\bullet(S, M)$ and is
called the Poisson homology of $S$ with coefficients in $M$.

Denote by $\mathfrak{X}^k(S)$ the space of all skew-symmetric
$\mathbbm{k}$-linear maps $S^{\otimes k}\rightarrow S$ that are derivations in each argument. Here, a map $f: S^k\rightarrow S$ is called
skew-symmetric if $f(a_1, \cdots,
a_k)=\varepsilon(\theta)f(a_{\theta_1}, \cdots,
a_{\theta_k})$ for any permutation $\theta\in S_k$, where
$\varepsilon(\theta)$ denotes its sign and $\theta_i$ stands for
$\theta(i)$. There is a cochain complex $(\mathfrak{X}^\bullet(S),\delta_\bullet)$, where $\delta_k: \mathfrak{X}^k(S)\rightarrow
\mathfrak{X}^{k+1}(S)$ is defined by:
\begin{align*}
\delta_k(P)(y_0, y_1, \cdots, y_k) &:= \sum_{0\leq i\leq k}(-1)^i\{y_i, P(y_0, y_1, \cdots, \widehat{y_i}, \cdots y_k)\} +\\
 & +\sum_{0\leq i< j\leq k}(-1)^{i+j}P(\{y_i, y_j\}, y_0, y_1, \cdots, \widehat{y_i}, \cdots\widehat{y_j}, \cdots, y_k)
\end{align*}
for all $P\in \mathfrak{X}^k(S)$. It is  not hard to check that
$\delta_k(P)$ belongs to $\mathfrak{X}^{k+1}(S)$ and that
$\delta_{k+1}\delta_k=0$. The cohomology of this complex is denoted
by $\HP^\bullet(S)$, and is called the Poisson cohomology of $S$.

\section{Duality between  Poisson homology and  Poisson cohomology}

Let $S$ be a finite dimensional Poisson algebra. By Proposition \ref{dual}, the left Poisson module $S$ induces a right Poisson module structure on $S^*$.
For a left Poisson module $M$,  there is cochain complex $(\Hom_S(\Omega^\bullet(S), M), \delta_\bullet^\prime)$, where $\delta_k^\prime: \Hom_S(\Omega^{k}(S), M)\rightarrow \Hom_S(\Omega^{k+1}(S), M)$ is given by \begin{align*}
\delta_k^\prime(g)(da_0\wedge da_1\wedge& \cdots\wedge da_k) := \sum_{0\leq i\leq k}(-1)^i\{a_i, g(da_0\wedge \cdots\wedge \widehat{da_i}\wedge \cdots\wedge da_k)\}_M +\\
 & +\sum_{0\leq i< j\leq k}(-1)^{i+j}g(d\{a_i, a_j\}\wedge da_0\wedge \cdots\wedge \widehat{da_i}\wedge \cdots\wedge\widehat{da_j}\wedge\cdots\wedge da_k)
\end{align*} for all $g\in \Hom_S(\Omega^k(S), M)$.
We  have the following:

\begin{proposition} \label{dual3}  Let $M$ be a finite dimensional left Poisson module. The following diagram is commutative for any $k\in \mathbb{N}$:
$$\xymatrix{
 \Hom_S(\Omega^k(S), M)\ar[r]^{\varphi} \ar[d]^{\delta_k^\prime}
                &\Hom_\mathbbm{k}(M^*\otimes_S \Omega^k(S), \mathbbm{k}) \ar[d]^{\partial_k^*}  \\
\Hom_S(\Omega^{k+1}(S), M)\ar[r]^-{\varphi}
                &\Hom_\mathbbm{k}(M^*\otimes_S \Omega^{k+1}(S), \mathbbm{{k}}),} $$
where $\varphi$ is the canonical isomorphism $f\mapsto \varphi_f$ defined by  $\varphi_f(\alpha\otimes x):=\alpha(f(x))$ for
any $x\in \Omega^k(S)$ and $\alpha\in M^*$.
\end{proposition}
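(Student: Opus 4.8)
The plan is a direct verification on generators. Both $\varphi\circ\delta_k'$ and $\partial^*\circ\varphi$ are $\mathbbm{k}$-linear, so it suffices to fix $f\in\Hom_S(\Omega^k(S),M)$ together with an element $\alpha\otimes da_0\wedge\cdots\wedge da_k$ of $M^*\otimes_S\Omega^{k+1}(S)$ and to check that $\varphi_{\delta_k'(f)}$ and $\partial^*(\varphi_f)$ take the same value on it. Here $\partial^*$ denotes the $\mathbbm{k}$-dual of the Poisson homology boundary $M^*\otimes_S\Omega^{k+1}(S)\to M^*\otimes_S\Omega^{k}(S)$, where $M^*$ carries the right Poisson module structure obtained from Proposition \ref{dual}: since $M$ is a left Poisson module, $M^*$ has right actions $(\alpha\cdot s)(m)=\alpha(s\cdot m)$ and $\{\alpha,s\}_{M^*}(m)=\alpha(\{s,m\}_M)$.

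Before the chase I would settle two preliminaries. First, $\varphi_f$ is well defined on the tensor product over $S$: using that $f$ is $S$-linear and the above module action on $M^*$, both $\varphi_f(\alpha\cdot s\otimes x)$ and $\varphi_f(\alpha\otimes s\cdot x)$ equal $\alpha(s\cdot f(x))$. Second, that $\varphi$ is a bijection is the standard finite-dimensional duality $\Hom_S(P,M)\cong\Hom_\mathbbm{k}(M^*\otimes_S P,\mathbbm{k})$, valid because $M$ is finite dimensional, which I would simply invoke; the real content of the proposition is the commutativity of the square, not the fact that $\varphi$ is an isomorphism.

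Then I would expand both composites. On the cohomology side, $\varphi_{\delta_k'(f)}(\alpha\otimes da_0\wedge\cdots\wedge da_k)=\alpha\big(\delta_k'(f)(da_0\wedge\cdots\wedge da_k)\big)$, which by the definition of $\delta_k'$ breaks into a Lie sum with terms $(-1)^i\,\alpha(\{a_i,f(\cdots\widehat{da_i}\cdots)\}_M)$ and a bracket sum with terms $(-1)^{i+j}\,\alpha(f(d\{a_i,a_j\}\wedge\cdots))$. On the homology side, $\partial^*(\varphi_f)$ on the same generator equals $\varphi_f(\partial(\alpha\otimes da_0\wedge\cdots\wedge da_k))$; after reindexing the boundary formula from the range $1,\dots,k+1$ to $0,\dots,k$ one obtains a Lie sum with terms $(-1)^i\,\{\alpha,a_i\}_{M^*}(f(\cdots))$ and a bracket sum with terms $(-1)^{i+j}\,\alpha(f(d\{a_i,a_j\}\wedge\cdots))$. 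The bracket sums agree at once since $\alpha$ passes through $f$, and the Lie sums agree because $\{\alpha,a_i\}_{M^*}(f(\cdots))=\alpha(\{a_i,f(\cdots)\}_M)$ by the dual module structure.

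The only delicate point is the bookkeeping of signs and orientations. I must verify that reindexing $\partial$ from $1,\dots,k+1$ to $0,\dots,k$ sends the sign $(-1)^{i+1}$ of the homology Lie term to $(-1)^i$ while leaving $(-1)^{i+j}$ in the bracket term, so that these match the signs $(-1)^i$ and $(-1)^{i+j}$ appearing in $\delta_k'$; and that the dual right Lie action $\{\alpha,a_i\}_{M^*}(m)=\alpha(\{a_i,m\}_M)$ contributes no extra sign relative to the left Lie action $\{a_i,-\}_M$ used in $\delta_k'$. Once these conventions are aligned the two expansions become termwise identical and commutativity follows; beyond this sign matching there is no genuine obstacle.
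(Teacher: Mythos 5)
Your proof is correct and takes essentially the same route as the paper's: both expand $\varphi\delta_k'(f)$ and $\partial_k^*\varphi(f)$ on a generator $\alpha\otimes da_0\wedge\cdots\wedge da_k$ and match the Lie-action terms and bracket terms one by one, the key identification being the dual Poisson module structure $\{\alpha,s\}_{M^*}(m)=\alpha(\{s,m\}_M)$ from Proposition \ref{dual}. Your explicit checks of the well-definedness of $\varphi_f$ over $\otimes_S$ and of the sign reindexing $(-1)^{i+1}\mapsto(-1)^i$ are points the paper passes over silently, so nothing is missing.
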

\begin{proof} It suffices to show that $\varphi\delta_k^\prime(f)$ and $\partial_k^*\varphi(f)$ define the same morphism from  $M^*\otimes_S \Omega^{k+1}(S)$ to $\mathbbm{k}$ for any $f\in \Hom_S(\Omega^k(S), M)$. Indeed, for any $x=da_0\wedge da_1\wedge \cdots\wedge da_k\in \Omega^{k+1}(S)$ and $\alpha\in M^*$, we have:
\begin{align*}
&\varphi\delta_k^\prime(f)(\alpha \otimes x)\\
=&\alpha(\delta_k^\prime(f)(x))\\
= &\sum_{0\leq i\leq k}(-1)^i\alpha(\{a_i, f(da_0\wedge \cdots\wedge \widehat{da_i}\wedge \cdots\wedge da_k)\}_M) +\\
 & +\sum_{0\leq i< j\leq k}(-1)^{i+j}\alpha f(d\{a_i, a_j\}\wedge da_0\wedge \cdots\wedge \widehat{da_i}\wedge \cdots\wedge\widehat{da_j}\wedge\cdots\wedge da_k).
\end{align*}
On the other hand, we have:
\begin{align*}
&\partial_k^*\varphi(f)(\alpha \otimes x)\\
=&\varphi_f(\partial_k(\alpha \otimes x))\\
= &\varphi_f\Big(\sum_{0\leq i\leq k}(-1)^{i}\{\alpha, a_i\}_{M^*}\otimes da_0\wedge\cdots \widehat{da_i}\wedge\cdots \wedge da_k +\\
 & +\sum_{0\leq i< j\leq k}(-1)^{i+j}\alpha\otimes d\{a_i, a_j\}\wedge da_0\wedge\cdots \wedge\widehat{da_i}\wedge\cdots \wedge\widehat{da_j}\wedge\cdots \wedge da_k\Big)\\
= &\{\alpha, a_i\}_{M^*}\Big(f(\sum_{0\leq i\leq k}(-1)^{i}da_0\wedge\cdots \widehat{da_i}\wedge\cdots \wedge da_k)\Big) +\\
 & +\alpha\Big(f(\sum_{0\leq i< j\leq k}(-1)^{i+j}d\{a_i, a_j\}\wedge da_0\wedge\cdots \wedge\widehat{da_i}\wedge\cdots \wedge\widehat{da_j}\wedge\cdots \wedge da_k)\Big).
\end{align*}
Hence, $\varphi\delta_k^\prime(f)(\alpha \otimes x)=\partial_k^*\varphi(f)(\alpha \otimes x)$ by the Poisson module structure on $M^*$.
\end{proof}

Now we can state the following duality:

\begin{theorem} \label{fdim}Let $S$ be a finite dimensional Poisson algebra. Then, for all $i\in \mathbb{N}$, $$\HP^{i}(S, \,S)^*\cong\HP_i(S, \,S^*).$$
\end{theorem}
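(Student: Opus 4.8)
The plan is to derive Theorem \ref{fdim} as a direct consequence of Proposition \ref{dual3}, applied to the specific Poisson module $M = S^*$. The key observation is that $S$ itself carries a natural left Poisson module structure (as noted after the definition of Poisson modules), and since $S$ is finite dimensional, Proposition \ref{dual} applies to produce $S^*$ as a right Poisson module, while a second application (or Proposition \ref{dual} read in the other direction) realizes $(S^*)^* \cong S$ as a left Poisson module via the canonical double-dual identification. Finite dimensionality is exactly what makes this reflexivity work, so the hypothesis is used in an essential way.

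\emph{The main steps.} First I would set $M = S^*$ in Proposition \ref{dual3}. Then $M^* = (S^*)^* \cong S$ canonically as right Poisson modules (using finite dimensionality to identify the double dual, and checking that the twice-dualized Poisson actions recover the original ones on $S$). With this identification, the right-hand column of the commutative diagram in Proposition \ref{dual3} becomes the $\mathbbm{k}$-linear dual of the Poisson homology complex computing $\HP_\bullet(S, S)$: concretely, $\Hom_\mathbbm{k}(M^* \otimes_S \Omega^k(S), \mathbbm{k}) = \Hom_\mathbbm{k}(S \otimes_S \Omega^k(S), \mathbbm{k})$, and the vertical maps $\partial_k^*$ are the transposes of the Poisson boundary operators $\partial_k$. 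Meanwhile the left-hand column $(\Hom_S(\Omega^\bullet(S), S^*), \delta_\bullet')$ is, after another dualization, the complex computing $\HP^\bullet(S, S)$; one checks that $\Hom_S(\Omega^k(S), S^*) \cong \Hom_\mathbbm{k}(\Omega^k(S), \mathbbm{k})$ and that under this identification $\delta_k'$ agrees with the Poisson cochain differential $\delta_k$ of the $\mathfrak{X}^\bullet(S)$ complex.

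Once the two cochain complexes are identified as isomorphic via the vertical isomorphisms $\varphi$ of Proposition \ref{dual3}, the conclusion is immediate: a commuting ladder whose horizontal maps are isomorphisms induces isomorphisms on cohomology in every degree. Since taking $\mathbbm{k}$-linear duals is exact over a field, the dual of the homology complex has cohomology $\HP_i(S, S^*)^{**} \cong \HP_i(S, S^*)$ in each degree, and I would read off $\HP^i(S, S)^* \cong \HP_i(S, S^*)$ for all $i$.

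\emph{Anticipated obstacle.} The routine bookkeeping is harmless, but the genuinely delicate point is verifying that the Poisson cochain complex $(\mathfrak{X}^\bullet(S), \delta_\bullet)$ defining $\HP^\bullet(S)$ really coincides, under the identification $\mathfrak{X}^k(S) \cong \Hom_S(\Omega^k(S), S)$ of multiderivations with $S$-linear forms on $\Omega^\bullet(S)$, with the complex $(\Hom_S(\Omega^\bullet(S), S), \delta_\bullet')$ appearing in Proposition \ref{dual3}. This requires matching the two differential formulas term by term, tracking signs and the placement of the Poisson bracket versus the de Rham differential $\mathbf d$ and the exterior differential $d\{a_i,a_j\}$, and confirming that the skew-symmetry and derivation conditions transport correctly across the isomorphism. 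Getting the module $S^*$ (rather than $S$) on the homology side to match the untwisted $S$ on the cohomology side — the feature that later forces the twist $S_\sigma$ in the Frobenius refinement — is precisely what the double-dual identification $M^* \cong S$ handles here, so I would take care that no hidden automorphism sneaks in at this finite-dimensional, pre-Frobenius stage.
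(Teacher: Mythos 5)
Your overall strategy (instantiate Proposition \ref{dual3}, take cohomology of the two columns, use exactness of $\Hom_\mathbbm{k}(-,\mathbbm{k})$ and finite dimensionality) is exactly the paper's strategy, but you instantiate it with the wrong module: you set $M=S^*$, whereas the proof requires $M=S$. With $M=S^*$ the right-hand column of the diagram is $\Hom_\mathbbm{k}(M^*\otimes_S\Omega^\bullet(S),\mathbbm{k})$ with $M^*\cong S$, i.e.\ the $\mathbbm{k}$-dual of the complex computing $\HP_\bullet(S,S)$, not of the one computing $\HP_\bullet(S,S^*)$; and the left-hand column is $(\Hom_S(\Omega^\bullet(S),S^*),\delta'_\bullet)$, whose cohomology is a ``Poisson cohomology with coefficients in $S^*$,'' not $\HP^\bullet(S,S)$. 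So your ladder proves $H^i(\Hom_S(\Omega^\bullet(S),S^*),\delta')\cong\HP_i(S,S)^*$, which is a different statement from the theorem; indeed your final sentence, where $\HP_i(S,S^*)^{**}$ suddenly appears, is inconsistent with your own setup (your homology complex has coefficients $S$, not $S^*$), which signals that you silently slid back to the other instantiation.

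The step you use to bridge the two sides is false in the stated generality. You claim that $\Hom_S(\Omega^k(S),S^*)\cong\Hom_\mathbbm{k}(\Omega^k(S),\mathbbm{k})$ can be identified, compatibly with the differentials, with the complex $(\mathfrak{X}^\bullet(S),\delta_\bullet)$. But $\mathfrak{X}^k(S)\cong\Hom_S(\Omega^k(S),S)$, and for a general finite-dimensional Poisson algebra there is no isomorphism (of complexes, nor even of vector spaces) between $\Hom_S(\Omega^k(S),S)$ and $\Hom_S(\Omega^k(S),S^*)$: such an identification amounts to $S\cong S^*$ as $S$-modules, i.e.\ to $S$ being Frobenius, which is precisely the extra hypothesis of Corollary \ref{dual2} and is not available in Theorem \ref{fdim}. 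The repair is simple and is the paper's proof: take $M=S$. Then the left column is literally $(\mathfrak{X}^\bullet(S),\delta_\bullet)$ --- this is the identification you correctly flag as the delicate point, and it is needed here with coefficients $S$, not $S^*$ --- the right column is the dual of the complex computing $\HP_\bullet(S,S^*)$, and your remaining arguments (isomorphic ladder, exactness of dualization, double dual in finite dimension) go through verbatim to give $\HP^i(S,S)\cong\HP_i(S,S^*)^*$ and hence $\HP^i(S,S)^*\cong\HP_i(S,S^*)$.
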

\begin{proof}
Let $M=S$. It is easy to see that
the cochain complex $(\Hom_S(\Omega^\bullet(S), M),\delta_\bullet^\prime)$ coincides with the cochain complex $(\mathfrak{X}^\bullet(S),\delta_\bullet)$,
where the latter is used to compute the Poisson cohomology of $S$.
Since $\Hom_\mathbbm{k}(-, \mathbbm{k})$ is an exact functor, it commutes with the cohomological functors.  Thus, by taking the cohomology of the columns in the commutative diagram in Proposition \ref{dual3}, we obtain the isomorphism between Poisson homology and Poisson cohomology.
\end{proof}

Suppose that $S$ is a Frobenius Poisson algebra. By definition we have
an isomorphism of right $S$-modules $\sigma: S\rightarrow S^*$. By Proposition \ref{dual}, $S^*$ is a right Poisson module.  This right Poisson module structure on $S^*$ induces a right Poisson module structure on $S$, see Proposition \ref{iso}.
In order to distinguish this right Poisson module structure from the regular right Poisson module structure on $S$, we call it the induced Poisson module structure, and denote it by $S_\sigma$. Note that a Frobenius Poisson algebra is always symmetric as an algebra. Hence, $S_\sigma$ and $S$ are isomorphic as $S$-modules.
Following Theorem \ref{fdim}, we obtain the following duality:

\begin{corollary} \label{dual2}Let $S$ be a Frobenius Poisson algebra. Then,
for all $i\in \mathbb{N}$,  we have:
$$\HP^{i}(S, \, S)^*\cong\HP_i(S, \,S_\sigma).$$
\end{corollary}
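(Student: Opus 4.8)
The plan is to deduce this directly from Theorem \ref{fdim} by replacing the coefficient module $S^*$ with the isomorphic Poisson module $S_\sigma$. First I would record that a Frobenius Poisson algebra is automatically finite dimensional: the Frobenius isomorphism $\sigma\colon S\to S^*$ is in particular an isomorphism of $\mathbbm{k}$-vector spaces, and $S\cong S^*=\Hom_\mathbbm{k}(S,\mathbbm{k})$ forces $\dim_\mathbbm{k}S<\infty$. Hence Theorem \ref{fdim} applies and gives $\HP^i(S,S)^*\cong\HP_i(S,S^*)$ for every $i\in\mathbb{N}$.

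Next I would identify $S^*$ with $S_\sigma$ as right Poisson modules. By Proposition \ref{dual}, the regular left Poisson module $S$ makes $S^*$ into a right Poisson module, and by construction $S_\sigma$ is the right $S$-module $S$ equipped with the right Poisson structure transported along $\sigma$ as in Proposition \ref{iso}. The content of Proposition \ref{iso} is precisely that, with $N=S^*$, $M=S$, and $f=\sigma$, the Frobenius isomorphism $\sigma\colon S_\sigma\to S^*$ becomes an isomorphism of right Poisson modules.

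It then remains to observe that an isomorphism of right Poisson modules induces an isomorphism on Poisson homology. Given such an $f\colon M\to N$, I would check that $f\otimes_S\id\colon M\otimes_S\Omega^\bullet(S)\to N\otimes_S\Omega^\bullet(S)$ commutes with the boundary operators $\partial_\bullet$: in the first sum defining $\partial_k$ one uses $f(\{m,a_i\}_M)=\{f(m),a_i\}_N$, the defining property of a morphism of Poisson modules, while the second sum involves $f$ only through the $S$-linear tensor factor, where $f$ is $S$-linear. Thus $f\otimes_S\id$ is an isomorphism of chain complexes, so $\HP_\bullet(S,M)\cong\HP_\bullet(S,N)$; applying this with $f=\sigma$ yields $\HP_i(S,S^*)\cong\HP_i(S,S_\sigma)$, and combining with the isomorphism of the first paragraph completes the proof.

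The argument is essentially formal once Theorem \ref{fdim} is available, and the only point requiring (routine) verification is the compatibility of $f\otimes_S\id$ with $\partial_\bullet$, that is, the functoriality of Poisson homology in the module slot. I do not expect a genuine obstacle here.
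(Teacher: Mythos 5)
Your proposal is correct and follows essentially the same route as the paper: the corollary is obtained by applying Theorem \ref{fdim} (a Frobenius algebra being automatically finite dimensional) and then transporting the coefficient module $S^*$ to $S_\sigma$ via the Poisson module isomorphism $\sigma$ furnished by Proposition \ref{iso}. Your explicit verification that a Poisson module isomorphism induces an isomorphism on Poisson homology, and your remark on finite dimensionality, are points the paper leaves implicit but are exactly the intended argument.
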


Below, we consider two examples of Frobenius Poisson algebras.

\begin{example}\label{ex3.4}
First, let $\Lambda$ be the Poisson algebra defined on algebra
$$\mathbbm{k}[x_1, \cdots, x_n]/(x_i^2, 1\leq i\leq n)$$ with
the Poisson bracket given by $\{x_i, x_j\}=x_ix_j$ for $i<j$. The Poisson algebra $\Lambda$  is of dimenion $2^n$ with a basis
$\{1\}\bigcup\{x_{i_1}\cdots x_{i_k}|i_1 < \cdots < i_k, \{i_1, \cdots, i_k\}\subseteq \{1, 2, \cdots, n\}\}$. %It is a symmetric Frobenius algebra,
%i.e., its Nakayama automorphism $\nu$ is the identity map. % The Hochschild (co)homology of its' Koszul
%dual,  quantum polynomial algebra, was computed in \cite{Wam93}.
%Note that $\Lambda$ is not unimodular (see \cite{Kos08}) and there is no
%Poincar\'{e} duality between the Poisson homology and cohomology \cite{Xu99}.
The Frobenius isomorphism,  $\sigma:\Lambda
\rightarrow \Lambda^*$, is given by:
$$\sigma(1): \sum\limits_{k=0}^n\sum c_{i_1,\cdots, i_k}x_{i_1}\cdots x_{i_k}\rightarrow c_{1,\cdots, n},$$
 where the second sum is taken over all elements in the above basis. Thus, by Proposition \ref{dual},
the induced right Poisson module $\Lambda_\sigma$ is given by:
 $$\{x_i, x_j\}_{\Lambda_{\sigma}}:=
\begin{cases}(n+2-2j)x_ix_j, & i < j, \\
  0, & i = j, \\
  (n-2j)x_jx_i, & i > j.
\end{cases}$$
Hence, we have the isomorphism: $\HP^{i}(\Lambda, \,\Lambda)^*\cong\HP_i(\Lambda, \,\Lambda_\sigma)$, for all $i\in \mathbb{N}$.
\end{example}

\begin{example}\label{ex3.5}
In  \cite{LR09}, Launois and Richard study the Poisson algebra
$$\Lambda(a, b):=\mathbbm{k}[x, y]/(x^a, y^b)$$
with the Poisson bracket $\{x, y\}=xy$ ($a, b\geq 2$, two integers). As an algebra, $\Lambda(a, b)$ is a symmetric Frobenius algebra.
By viewing $\Lambda(a, b)$ as a semi-classical
limit of the quantum complete intersection, the authors constructed a Poisson module $\Lambda(a, b)_\sigma$ and computed explicitly
the Poisson (co)homology group point-wise. Their duality theorem for $\Lambda(a,b)$ now follows from Corollary \ref{dual2}:

$$\HP^{i}(\Lambda(a, b), \,\Lambda(a, b))^*\cong\HP_i(\Lambda(a, b), \,\Lambda(a, b)_\sigma), \text{for all} \,\,i\in \mathbb{N}.$$
\end{example}

Note that the above Poisson (co)homology group  vanishes when $i\geq 3$.  The Frobenius isomorphism $\sigma:\Lambda(a, b)\rightarrow \Lambda(a, b)^*$ of right $\Lambda(a, b)$-modules  is defined by:
$$\sigma(1): \sum_{\scriptstyle 0\leq i \leq a-1 \atop\scriptstyle  0\leq j\leq b-1}c_{i, j}x^{i}y^{j}\rightarrow c_{a-1, b-1}.$$
The induced Poisson module $\Lambda(a, b)_\sigma$ is given by: %as a right $\Lambda(a, b)$-module, $\Lambda(a, b)^\sigma\cong\Lambda(a, b)$ and
%the Poisson action given by
$$\{x^iy^j, x\}_{\Lambda(a, b)_{\sigma}}=(b-1-j)x^{i+1}y^j, \quad\quad \{x^iy^j, y\}_{\Lambda(a, b)_{\sigma}}=-(a-1-i)x^{i}y^{j+1}.$$

\section{BV-structure}

In this section, we investigate the Batalin-Vilkovisky structure on the Poisson cohomology ring of a Frobenius Poisson algebra. The key point in this discussion is the non-degenerated bilinear form which defines the Frobenius structure.

\begin{definition}
A Gerstenhaber algebra $(A^\bullet, \wedge, [\,\,, \,\,])$ is a $\mathbb{Z}$-graded commutative algebra $(A, \wedge)$,
together with a bracket $[\,\,, \,\,]: A\otimes A\rightarrow A$ of degree $-1$ such that the induced bracket of degree
zero on the shifted graded space $A^{\bullet+1}$ is a graded Lie algebra, satisfying the compatible condition:
$$[a\wedge b, c]=a\wedge[b, c]+(-1)^{(m-1)n} b\wedge[a, c], a\in A^m, b\in A^n, c\in A.$$
\end{definition}

\begin{definition}
A Batalin-Vilkovisky (BV, for short) algebra is a triple $(A^\bullet, \wedge, \Delta)$, where $(A, \wedge)$ is a $\mathbb{Z}$-graded commutative algebra,
and  $\Delta: A\rightarrow A$ is an operator of degree $-1$ such that $\Delta^2=0$, and  the bracket $[\,\,, \,\,]$ defined by:
\begin{equation} \label{bvdef}
-(-1)^{(m-1)n}[a, b]:=\Delta(a\wedge b)-\Delta(a)\wedge b-(-1)^ma\wedge\Delta(b)+(-1)^ma\wedge\Delta(1)\wedge b
\end{equation} for $a\in A^m, b\in A^n$, endows $(A^\bullet, \wedge, [\,\,, \,\,])$ with a Gerstenhaber algebra structure. Here, $1\in A^0$ is the identity of the algebra.
\end{definition}

\begin{remark} If there exists a degree $-1$ operator $\Delta$ with $\Delta^2=0$ on a Gerstenhaber algebra $(A^\bullet, \wedge, [\,\,, \,\,])$ such that the equation \eqref{bvdef} holds, then $A$ is a BV algebra.
\end{remark}

\begin{example}
The Hochschild cohomology ring $\HH(A)$ of an associative algebra is a Gerstenhaber algebra \cite{Ger63}.
If $A$ is a Calabi-Yau or a symmetric Frobenius algebra,  $\HH(A)$  has a BV-structure \cite{Gin06, Tra08, ES09}.
\end{example}

Let $S$ be a Poisson algebra and $\mathfrak{X}(S):=\oplus_i\mathfrak{X}^i(S)$.
The wedge product $\wedge: \mathfrak{X}^m(S)\otimes \mathfrak{X}^n(S)\rightarrow \mathfrak{X}^{m+n}(S)$  is given by:
$$(P\wedge Q)(a_1, \cdots, a_{m+n}):=\sum\limits_{\theta\in S_{m, n}}\varepsilon(\theta) P(a_{\theta_1}, \cdots, a_{\theta_m})Q(a_{\theta_{m+1}}, \cdots, a_{\theta_{m+n}}),$$
where $S_{m, n}$ is the set of $(m, n)$-shuffle in the symmetric group $S_{m+n}$, i.e., the set of permutations
$\theta\in S_{m+n}$ satisfying $\theta_1<\cdots<\theta_m$  and $\theta_{m+1}<\cdots<\theta_{m+n}$. One has
the Schouten bracket: $$[P, Q]_S:=P\circ Q-(-1)^{(m-1)(n-1)}Q\circ P,$$
where the product $\circ$ is defined by:
$$(P\circ Q)(a_1, \cdots, a_{m+n-1}):=\sum\limits_{\theta\in S_{m-1,n}}\varepsilon(\theta) P(Q(a_{\theta_1}, \cdots, a_{\theta_n}), a_{\theta_{n+1}}, \cdots, a_{\theta_{m+n-1}}),$$ for all $P\in\mathfrak{X}^m(S)$ and $Q\in\mathfrak{X}^n(S)$.
The triple $(\mathfrak{X}(S), \wedge,  [\,\,, \,\,]_S)$ is a
Gerstenhaber algebra. Note that the wedge product and Schouten bracket induce two operations on the cohomology ring such that $\HP(S):=\oplus\HP^\bullet(S)$  has a Gerstenhaber algebra structure \cite{LPV13}. Such an algebra is called the Poisson cohomology ring.

\begin{theorem} \label{combv}
Let $S$ be a Frobenius Poisson algebra with a non-degenerated bilinear form $\langle-, -\rangle: S\otimes S\rightarrow \mathbbm{k}$.
For $P\in \mathfrak{X}^m(S)$\quad $(m\geq 1)$, define $\Delta(P)\in \mathfrak{X}^{m-1}(S)$ as follows:
$$\langle\Delta(P)(a_1, \cdots, a_{m-1}), a_m\rangle=(-1)^{m-1}\langle P(a_1, \cdots, a_{m-1}, a_m),1\rangle.$$
Then,  $(\mathfrak{X}(S),  \wedge, \Delta)$ is a BV algebra.
\end{theorem}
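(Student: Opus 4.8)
The plan is to reduce everything to the remark following the definition of a BV algebra: since the triple $(\mathfrak{X}(S), \wedge, [P,Q]_S)$ is already a Gerstenhaber algebra \cite{LPV13}, it suffices to check that $\Delta$ is a well-defined operator of degree $-1$, that $\Delta^2 = 0$, and that the Schouten bracket is recovered from $\Delta$ through the defining equation \eqref{bvdef}. I would extend $\Delta$ to all of $\mathfrak{X}(S)$ by declaring it to be zero on $\mathfrak{X}^0(S) = S$; in particular $\Delta(1) = 0$, so the final term of \eqref{bvdef} drops out. Throughout I would use the identity $\langle xy, 1\rangle = \langle x, y\rangle$ and the symmetry of the form, which is automatic here since $S$ is commutative: $\langle a, b\rangle = \langle ab, 1\rangle = \langle ba, 1\rangle = \langle b, a\rangle$.

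For well-definedness, non-degeneracy produces, for fixed $a_1, \dots, a_{m-1}$, a unique $\Delta(P)(a_1, \dots, a_{m-1}) \in S$ satisfying the defining equation, and multilinearity and skew-symmetry in these arguments are inherited from $P$. The only delicate point is that $\Delta(P)$ is a derivation in each argument; I would verify this for the last slot (the others follow by skew-symmetry) by testing $\langle \Delta(P)(\cdots, bc), a_m\rangle$ against an arbitrary $a_m$, using that $P$ is a derivation in both its $(m-1)$-st and $m$-th arguments and moving factors across the pairing by the Frobenius identity; the spurious terms cancel because $P$ is skew-symmetric in its last two arguments. The vanishing $\Delta^2 = 0$ is then easy: applying the definition twice and pairing $\Delta^2(P)$ against a test element reduces, up to sign, to $\langle P(a_1, \dots, a_{m-1}, 1), 1\rangle$, which is zero because a derivation annihilates the unit, and non-degeneracy finishes.

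The main obstacle is the Koszul-type identity
$$\Delta(P \wedge Q) - \Delta(P) \wedge Q - (-1)^m P \wedge \Delta(Q) = -(-1)^{(m-1)n}[P,Q]_S$$
for $P \in \mathfrak{X}^m(S)$ and $Q \in \mathfrak{X}^n(S)$. Conceptually, $\Delta$ is the algebraic divergence operator attached to the ``volume form'' $\langle -, 1\rangle$, and this is the coordinate-free, purely algebraic analogue of Koszul's theorem that the divergence generates the Schouten--Nijenhuis bracket; since we work in a possibly singular setting I would establish it by hand rather than by reduction to the smooth case. Both sides lie in $\mathfrak{X}^{m+n-1}(S)$, so by non-degeneracy it is enough to evaluate on $(a_1, \dots, a_{m+n-1})$ and pair with an arbitrary $a_{m+n}$. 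I would expand $\Delta(P\wedge Q)$, $\Delta(P)\wedge Q$ and $P\wedge\Delta(Q)$ using the shuffle formula for $\wedge$ together with the definition of $\Delta$, convert every product inside $\langle -, 1\rangle$ into a pairing via $\langle xy, 1\rangle = \langle x, y\rangle$, and then use the derivation property to distribute the test argument $a_{m+n}$ over the two tensor factors. After reindexing the shuffle sums, the terms in which $a_{m+n}$ is absorbed into one factor assemble into $P \circ Q$ and those in which it is absorbed into the other assemble into $Q \circ P$, reproducing the Schouten bracket, while all remaining terms cancel in pairs by the skew-symmetry of $P$ and $Q$. I expect the genuine difficulty to be concentrated in the combinatorics of this shuffle reindexing and in tracking signs so that the non-bracket terms cancel and the surviving sign matches $-(-1)^{(m-1)n}$; the case $m = n = 1$, where the identity reduces to $-[P,Q]_S(a) = -P(Q(a)) + Q(P(a))$, is a useful consistency check and already pinpoints where the symmetry of the form is needed. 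Once this identity is in hand, the remark after the BV definition applies and yields that $(\mathfrak{X}(S), \wedge, \Delta)$ is a BV algebra.
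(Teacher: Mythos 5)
Your proposal takes essentially the same route as the paper: both reduce the theorem, via the known Gerstenhaber structure on $\mathfrak{X}(S)$ and the remark after the BV definition (with $\Delta$ extended by zero in degree $0$), to the single identity $[P,Q]_S=-(-1)^{(m-1)n}\bigl(\Delta(P\wedge Q)-\Delta(P)\wedge Q-(-1)^m P\wedge\Delta(Q)\bigr)$, and both prove it by pairing against a test element $a_{m+n}$, moving factors across the form by the Frobenius identity, splitting sums with the derivation property of $P$ in its last slot, and reindexing shuffle sums with sign bookkeeping --- the paper organizes exactly your ``which factor absorbs $a_{m+n}$'' grouping as the decomposition $P\wedge Q=P\wedge_1 Q+P\wedge_2 Q$ and the two sub-identities \eqref{eqxx2} and \eqref{eqxx3}. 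Your explicit verification that $\Delta(P)$ lies in $\mathfrak{X}^{m-1}(S)$ and your derivation-kills-the-unit argument for $\Delta^2=0$ are points the paper treats as immediate, and they are correct.
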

\begin{proof} It is obvious that $\Delta^2=0$. It remains to show that the following identity holds:
\begin{equation}\label{eqxx1}
[P, Q]_S=-(-1)^{(m-1)n}(\Delta(P\wedge Q)-\Delta(P)\wedge Q-(-1)^mP\wedge\Delta(Q))
\end{equation}
 for any $P\in\mathfrak{X}^m(S)$ and $Q\in\mathfrak{X}^n(S)$.
Given $\theta\in S_{m, n}$, we have either $\theta_m=m+n$ or $\theta_{m+n}=m+n$. Set:
$$(P\wedge_1Q)(a_1, \cdots, a_{m+n}):=\sum_{\scriptstyle \theta\in S_{m, n} \atop\scriptstyle  \theta_{m}=m+n}\varepsilon(\theta) P(a_{\theta_1}, \cdots, a_{\theta_m})Q(a_{\theta_{m+1}}, \cdots, a_{\theta_{m+n}}),$$
and: $$(P\wedge_2Q)(a_1, \cdots, a_{m+n}):=\sum_{\scriptstyle \theta\in S_{m, n} \atop\scriptstyle  \theta_{m+n}=m+n}
\varepsilon(\theta) P(a_{\theta_1}, \cdots, a_{\theta_m})Q(a_{\theta_{m+1}}, \cdots, a_{\theta_{m+n}}).$$
Then, $P\wedge Q=P\wedge_1Q+P\wedge_2Q$.
We claim that the following identity holds:
\begin{equation}\label{eqxx2}
P\circ Q+(-1)^{(m-1)n}(\Delta(P\wedge_1Q)-\Delta(P)\wedge Q)=0.
\end{equation}
Indeed, for any $a_1, \cdots, a_{m+n-1}, a_{m+n}\in S$, we have:
\begin{align*}
\langle P\circ Q&(a_1, \cdots, a_{m+n-1}), a_{m+n} \rangle\\
=&\sum\limits_{\eta\in S_{n, m-1}}\varepsilon(\eta)\langle P(Q(a_{\eta_1}, \cdots, a_{\eta_{n}}), a_{\eta_{n+1}}, \cdots, a_{\eta_{m+n-1}}), a_{m+n}\rangle\\
=&\sum\limits_{\eta\in S_{n, m-1}}\varepsilon(\eta) \langle P(Q(a_{\eta_1}, \cdots, a_{\eta_{n}}), a_{\eta_{n+1}}, \cdots, a_{\eta_{m+n-1}})a_{m+n}, 1\rangle;
\end{align*}
and:
\begin{align*}
\langle\Delta(P\wedge_1 Q)&(a_1, \cdots, a_{m+n-1}), a_{m+n}\rangle\\
=&(-1)^{m+n-1}\langle(P\wedge_1 Q)(a_1, \cdots, a_{m+n-1}, a_{m+n}), 1\rangle\\
=&(-1)^{m+n-1} \sum_{\scriptstyle \tau\in S_{m, n} \atop\scriptstyle  \tau_{m}=m+n}\varepsilon(\tau)\langle P(a_{\tau_1}, \cdots, a_{\tau_{m}})Q(a_{\tau_{m+1}}, \cdots, a_{\tau_{m+n}}), 1\rangle.
\end{align*}
Moreover,
\begin{align*}
\langle(\Delta(P)\wedge Q)&(a_1, \cdots, a_{m+n-1}), a_{m+n} \rangle\\
=&\sum\limits_{\theta\in S_{m-1, n}}\varepsilon(\theta)\langle \Delta(P)(a_{\theta_1}, \cdots, a_{\theta_{m-1}})Q(a_{\theta_{m}}, \cdots, a_{\theta_{m+n-1}}), a_{m+n}\rangle\\
=&\sum\limits_{\theta\in S_{m-1, n}}\varepsilon(\theta)\langle \Delta(P)(a_{\theta_1}, \cdots, a_{\theta_{m-1}}), Q(a_{\theta_{m}}, \cdots, a_{\theta_{m+n-1}})a_{m+n}\rangle\\
=&\sum\limits_{\theta\in S_{m-1, n}}(-1)^{m-1}\varepsilon(\theta)\langle P(a_{\theta_1}, \cdots, a_{\theta_{m-1}}, Q(a_{\theta_{m}}, \cdots, a_{\theta_{m+n-1}})a_{m+n}), 1\rangle\\
=&\sum\limits_{\theta\in S_{m-1, n}}(-1)^{m-1}\varepsilon(\theta)\langle P(a_{\theta_1}, \cdots, a_{\theta_{m-1}}, Q(a_{\theta_{m}}, \cdots, a_{\theta_{m+n-1}}))a_{m+n}, 1\rangle\\
&+\!\!\sum\limits_{\theta\in S_{m-1, n}}\!\!\!(-1)^{m-1}\varepsilon(\theta)\langle P(a_{\theta_1}, \cdots, a_{\theta_{m-1}}, a_{m+n})Q(a_{\theta_{m}}, \cdots, a_{\theta_{m+n-1}}), 1\rangle.
\end{align*}

Now for $r$ distinct integers
$i_1, \cdots, i_r$,  let $(i_1, \cdots, i_r)$ be the cyclic
permutation sending $i_1$ to $i_2$, $\cdots$, $i_{r-1}$ to $i_r$ and
$i_r$ to $i_1$. Then, the map $\theta\in
S_{m-1, n}\mapsto \tau\in \{\tau\in S_{m, n}|\tau_m=m+n\}$ given by $\tau:=\theta\circ(m+n, m+n-1, \cdots\, m)$ is
well-defined and is a bijection. In fact, it is easy to see that these
two sets have the same cardinality  and
the map is injective.  Moreover, one has
$\varepsilon(\tau)=(-1)^{n}\varepsilon(\theta)$. Hence, by the change of variables, we obtain:
\begin{align*}&(-1)^{m-1}\sum\limits_{\theta\in S_{m-1, n}}\varepsilon(\theta)P(a_{\theta_1}, \cdots, a_{\theta_{m-1}}, a_{m+n})Q(a_{\theta_{m}}, \cdots, a_{\theta_{m+n-1}})\\=&(-1)^{m+n-1} \sum_{\scriptstyle \tau\in S_{m, n} \atop\scriptstyle  \tau_{m}=m+n}\varepsilon(\tau)P(a_{\tau_1}, \cdots, a_{\tau_{m}})Q(a_{\tau_{m+1}}, \cdots, a_{\tau_{m+n}}).\end{align*}
Consider the map $\theta\in
S_{m-1, n}\mapsto \eta\in S_{n, m-1}$ defined by: $$\eta:=\theta\circ\Big(\begin{array}{cccccccc}
  1 & 2 & \cdots & n & n+1 & n+2 &  \cdots & m+n-1 \\
  m & m+1 & \cdots & m+n-1 & 1 & 2 & \cdots & m-1
\end{array}\Big).$$  It is easy to see that the foregoing map is
well-defined and is bijective. Furthermore, we have $\varepsilon(\eta)=(-1)^{n(m-1)}\varepsilon(\theta)$.
Hence,
\begin{align*}&\sum\limits_{\theta\in S_{m-1, n}}(-1)^{m-1}\varepsilon(\theta) P(a_{\theta_1}, \cdots, a_{\theta_{m-1}}, Q(a_{\theta_{m}}, \cdots, a_{\theta_{m+n-1}}))a_{m+n}\\
=&\sum\limits_{\eta\in S_{n, m-1}}(-1)^{(m-1)(n+1)}\varepsilon(\eta) P(a_{\eta_{n+1}}, \cdots, a_{\eta_{m+n-1}}, Q(a_{\eta_1}, \cdots, a_{\eta_{n}}))a_{m+n}.
\end{align*}
Now the equation \eqref{eqxx2} follows from the fact that $P$ is a skew-symmetric derivation and  that the bilinear form is non-degenerated.

Using a similar argument to the above one and the symmetry of the bilinear form $\langle-, -\rangle$, i.e., $\langle ab, c\rangle=\langle b, ca\rangle$, one can obtain the second identity:
\begin{equation}\label{eqxx3}
Q\circ P-(-1)^{m-1}\Delta(P\wedge_2Q)-P\wedge \Delta(Q)=0.
\end{equation}
Now combining Equation \eqref{eqxx2} and Equation \eqref{eqxx3}, one obtains Equation \eqref{eqxx1}.
\end{proof}

By definition, the operator $\Delta$ satisfies the following commutative diagram:
$$\xymatrix{
\mathfrak{X}^k(S)\ar[d]^{\Delta} \ar[r]^{\bigstar}
                & (\Omega^{k}(S))^* \ar[d]^{\mathbf{d}^*}  \\
\mathfrak{X}^{k-1}(S) \ar[r]^-{\bigstar}
                &(\Omega^{k-1}(S))^*      }$$
where $\mathbf{d}^*$  is the dual of de Rham differential and $\bigstar$ is the canonical isomorphism defined by $$\bigstar(P)(a_0da_1\cdots da_k):=\langle a_0, P(a_1, \cdots, a_k)\rangle.$$
Note that $\Delta$ is similar to the divergence operator with respect to
a volume form on an orientable Poisson manifold in the smooth case.

%Let us  point out, that the operator $\Delta$ is just dualized and transferred the de Rham differential $\mathbf{d}$ on
%the chains $\Omega^\bullet(S)$ to cochains $\mathfrak{X}^\bullet(S)$ via the inner product.
%
%divergence ???
%$\mathfrak{X}$
For an orientable Poisson manifold $M$ with the Poisson bivector field $\pi$, the divergence of $\pi$ is minus
the modular vector field of $\pi$ which is a Poisson $1$-cocycle and the cohomology class of such a $1$-cocycle is independent
of the chosen volume form.  If this cohomology class is trivial, then $M$ is said to be a  unimodular Poisson manifold.
%But here, $\Delta(\pi)$ is not necessary to be a $1$-cocycle.
Stimulated by the above idea, we introduce the notion of a unimodular Frobenius Poisson algebra.

\begin{definition} Let $S$ be a Frobenius Poisson algebra with the Poisson structure $\pi$, and let $\Delta$ be the operator defined as above. If $\Delta(\pi)=0$, then $S$ is called a unimodular Poisson algebra.
\end{definition}

\begin{proposition} \label{uni} For a Frobenius Poisson algebra $S$, the following are equivalent:
\begin{itemize}
\item[(1)] $S$ is unimodular;
\item[(2)] $\langle \{a, b\}, c\rangle=\langle b, \{c, a\}\rangle$ for any $a, b, c\in S$;
\item[(3)] the Frobenius isomorphism $\sigma: S\rightarrow S^*$ is a right Poisson module isomorphism.
\end{itemize}
\end{proposition}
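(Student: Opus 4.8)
The plan is to reduce all three conditions to a single statement about the Frobenius functional. Write $\tau := \langle -, 1\rangle : S \to \mathbbm{k}$. First I would record a few elementary facts that follow from the Frobenius axiom $\langle ab, c\rangle = \langle a, bc\rangle$ together with the commutativity of $S$: namely $\langle a, b\rangle = \langle ab, 1\rangle = \tau(ab)$ (take $c=1$ in the axiom), the functional $\tau$ satisfies $\tau(xy)=\tau(yx)$ (automatic since $S$ is commutative), and $\{1, a\} = 0$ for all $a$ (because $1 = 1\cdot 1$ and $\{-, a\}$ is a derivation). These rewrite the whole Frobenius pairing in terms of $\tau$ and the bracket.

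Next I would unwind condition (1). Since the Poisson structure $\pi \in \mathfrak{X}^2(S)$ is the bracket, the definition of $\Delta$ gives that $\Delta(\pi)\in\mathfrak{X}^1(S)$ is characterised by $\langle \Delta(\pi)(a), b\rangle = -\langle \{a,b\}, 1\rangle = -\tau(\{a,b\})$. Because $\langle-,-\rangle$ is non-degenerate, $\Delta(\pi) = 0$ holds if and only if $\tau(\{a,b\}) = 0$ for all $a, b \in S$. Thus (1) is equivalent to the vanishing of $\tau$ on all brackets. The engine driving the remaining implications is the identity, valid as soon as (1) holds, obtained by applying $\tau$ to the Leibniz expansion $\{xy, z\} = x\{y,z\} + \{x,z\}y$ and using $\tau(\{xy,z\})=0$ together with commutativity and antisymmetry of the bracket:
$$\tau\bigl(x\{y,z\}\bigr) = \tau\bigl(y\{z,x\}\bigr) \quad \text{for all } x,y,z \in S.$$

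For (1) $\Rightarrow$ (2) I would substitute into this identity to get $\tau(\{a,b\}c) = \tau(c\{a,b\}) = \tau(a\{b,c\}) = \tau(b\{c,a\}) = \langle b, \{c,a\}\rangle$, which is exactly (2); conversely, setting $c = 1$ in (2) and using $\{1,a\} = 0$ recovers $\tau(\{a,b\}) = 0$, i.e.\ (1). For the equivalence with (3), I would first translate the assertion that $\sigma$ is a morphism of right Poisson modules into a concrete pairing identity: using the induced right Poisson module structure on $S^*$ from Proposition \ref{dual} (with $S$ viewed as a left Poisson module) and the definition $\sigma(b) = \langle -, b\rangle$, the compatibility $\sigma(\{b,s\}) = \{\sigma(b), s\}_{S^*}$, evaluated at $a$, becomes $\tau(a\{b,s\}) = \tau(\{s,a\}b)$. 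The same key identity shows this holds for all $a,b,s$ precisely when (1) holds: the forward direction is the identity itself, and for the converse one specialises $a = 1$ and uses $\{s,1\}=0$.

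The step I expect to be the main obstacle is this last translation: correctly turning the abstract condition ``$\sigma$ is a Poisson module isomorphism'' into $\tau(a\{b,s\}) = \tau(\{s,a\}b)$. One must use the correct induced structure on $S^*$ from Proposition \ref{dual} — keeping straight which slot of the bracket carries the module element and which carries the algebra element — and track the antisymmetry of the bracket through the computation. Once the module conventions are unwound accurately, all three conditions collapse onto the single $\tau$-identity above, and the remaining manipulations are routine bookkeeping.
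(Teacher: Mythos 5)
Your proof is correct and follows essentially the same route as the paper: both reduce unimodularity to the vanishing of $\langle \{x,y\},1\rangle$, derive the cyclic identity from the Leibniz rule, and unwind the Poisson module morphism condition for $\sigma$ into the same pairing identity. The only cosmetic difference is that the paper closes the loop via $(2)\Leftrightarrow(3)$ (the translated morphism condition is literally condition (2) after relabeling), whereas you route $(3)$ back through $(1)$; both are immediate given your key identity.
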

\begin{proof} $(1)\Longrightarrow(2)$.  By definition, the unimodularity  of $S$ is equivalent to  the equation $\langle \{x, y\}, 1\rangle=0$  for any $x, y \in S$.  Now  if $S$ is unimodular, then we have:
 \begin{align*} \langle \{a, b\}, c\rangle-\langle b, \{c, a\}\rangle&=\langle \{a, b\}c, 1\rangle-\langle \{c, a\}b, 1\rangle\\
&=\langle \{a, b\}c, 1\rangle+\langle \{a, c\}b, 1\rangle \\
&=\langle \{a, bc\}, 1\rangle=0.\end{align*}
$(2)\Longrightarrow(1)$ is  obvious.
For the equivalence $(2)\Longleftrightarrow(3)$, we note that $\sigma: S\rightarrow S^*$ is a right Poisson module homomorphism if and only if
$\sigma(\{x, y\}_S)=\{\sigma(x), y\}_{S^*}$
holds in $S^*$ for any $x, y\in S$. That is, $$\langle \{y, z\}, x\rangle=\langle z, \{x, y\}\rangle$$
for any $x, y, z\in S$. Hence, $(2)\Longleftrightarrow(3)$ follows.
\end{proof}

\begin{remark}
\begin{itemize}
\item[(1)] The algebras $\Lambda$ and $\Lambda(a,b)$ described in Example \ref{ex3.4} and Example \ref{ex3.5} are not unimodular.
\item[(2)] For a unimodular Poisson algebra $S$, the induced Poisson module $S_\sigma$ is isomorphic to $S$ as right Poisson modules. It follows from  Theorem \ref{dual2} that the duality below holds:
$$\HP^{i}(S, \, S)^*\cong\HP_i(S, \,S), \quad \forall i\in \mathbb{N}.$$
\end{itemize}\end{remark}

\begin{example} Let $S$ be the Poisson algebra on $$\mathbbm{k}[x, y,z]/(x^2, y^2, z^2),$$
with the Poisson bracket defined by:
\begin{align*} \{x, y\}:&=xy,\\
\{y, z\}:&=yz,\\
\{x, z\}:&=-xz.
\end{align*}
Then, $S$ is a unimodular Poisson algebra.
\end{example}

\begin{theorem} \label{pcombv}
Let $S$ be a unimodular Poisson algebra. Then $\Delta$ induces an operator on Poisson cohomology $\HP^\bullet(S)$ whose square is zero. Furthermore, for
$\alpha\in \HP^m(S)$, $\beta\in \HP^n(S)$, the following identity holds,
$$[\alpha, \beta]_S=-(-1)^{(m-1)n}(\Delta(\alpha\wedge \beta)-\Delta(\alpha)\wedge\beta-(-1)^m\alpha\wedge\Delta(\beta)).$$
Equivalently, the Poisson cohomology ring $\HP(S)$ has a BV algebra structure.
\end{theorem}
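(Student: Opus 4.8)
The plan is to derive the entire statement from the chain-level result of Theorem \ref{combv}, namely that $(\mathfrak{X}(S), \wedge, \Delta)$ is a BV algebra, by feeding in the single new hypothesis that $S$ is unimodular. Three claims must be checked: that $\Delta$ descends to $\HP^\bullet(S)$, that the induced operator still squares to zero, and that the BV identity \eqref{eqxx1} persists on cohomology classes. The latter two are formal consequences once the first is secured, so the real content is to prove that $\Delta$ commutes with the Poisson coboundary $\delta$; this is exactly the step where unimodularity enters.

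First I would record the standard presentation of the Poisson differential as a Schouten bracket: if $\pi \in \mathfrak{X}^2(S)$ denotes the biderivation $\pi(a,b) = \{a, b\}$, then $\delta_k(P) = [\pi, P]_S$ for every $P \in \mathfrak{X}^k(S)$ (this is the Lichnerowicz description behind the Gerstenhaber structure of \cite{LPV13}, and one checks directly that $[\pi, -]_S$ reproduces the formula for $\delta_k$). Now I specialise the chain-level identity \eqref{eqxx1} of Theorem \ref{combv} to the first slot $\pi$: for $P \in \mathfrak{X}^m(S)$ it reads
$$[\pi, P]_S = -(-1)^{m}\bigl(\Delta(\pi\wedge P) - \Delta(\pi)\wedge P - \pi\wedge\Delta(P)\bigr).$$
By the definition of unimodularity, $\Delta(\pi) = 0$, so $\delta(P) = -(-1)^m\bigl(\Delta(\pi\wedge P) - \pi\wedge\Delta(P)\bigr)$; applying $\Delta$ and using $\Delta^2 = 0$ gives $\Delta\delta(P) = (-1)^m\Delta(\pi\wedge\Delta(P))$. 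Applying the same specialised identity instead to $\Delta(P) \in \mathfrak{X}^{m-1}(S)$, again with $\Delta(\pi) = 0$ and $\Delta^2 = 0$, yields $\delta\Delta(P) = [\pi, \Delta(P)]_S = -(-1)^{m-1}\Delta(\pi\wedge\Delta(P)) = (-1)^m\Delta(\pi\wedge\Delta(P))$. Hence $\Delta\delta = \delta\Delta$, so $\Delta$ carries cocycles to cocycles and coboundaries to coboundaries and therefore induces a well-defined degree $-1$ operator on $\HP^\bullet(S)$.

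With this in hand the conclusion is immediate. The induced $\Delta$ inherits $\Delta^2 = 0$ from $\mathfrak{X}(S)$. The wedge product and Schouten bracket already descend to make $\HP(S)$ a Gerstenhaber algebra by \cite{LPV13}, and $\Delta$ now descends as well, so each term of the chain-level identity \eqref{eqxx1} is represented by an operation compatible with $\delta$; the identity therefore holds verbatim for classes $\alpha \in \HP^m(S)$ and $\beta \in \HP^n(S)$. Since $1 \in \HP^0(S)$ and $\Delta$ lowers degree we have $\Delta(1) = 0$, so this identity is precisely the BV compatibility \eqref{bvdef}, and $(\HP(S), \wedge, \Delta)$ is a BV algebra. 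I expect the main obstacle to be purely a matter of bookkeeping: confirming the presentation $\delta = [\pi, -]_S$ in the sign conventions fixed in Section 2, and tracking the signs through the two applications of \eqref{eqxx1} carefully enough to see that $\Delta\delta$ and $\delta\Delta$ coincide (rather than differing by a sign, which would only give a chain map up to sign but would still suffice for descent).
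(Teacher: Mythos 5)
Your proposal is correct in substance, and it takes a genuinely different route from the paper. The paper proves descent by a direct computation: it pairs $\Delta\delta(P)$ and $\delta\Delta(P)$ against an arbitrary element via the defining property of $\Delta$, and uses Proposition \ref{uni}(2) (the identity $\langle\{a,b\},c\rangle=\langle b,\{c,a\}\rangle$, equivalent to unimodularity) to kill the terms involving $\langle\{-,-\},1\rangle$ and to match the remaining ones, arriving at $\Delta\delta=-\delta\Delta$. You instead recycle the chain-level identity \eqref{eqxx1} of Theorem \ref{combv}, specialized at the Poisson bivector $\pi$, together with the Lichnerowicz presentation of $\delta$ and the single hypothesis $\Delta(\pi)=0$. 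This is more conceptual: it reuses work already done, avoids any further manipulation of the bilinear form, and makes transparent that unimodularity is exactly the condition needed. One concrete correction, which your closing caveat anticipates: in the conventions of this paper the presentation is not $\delta_k=[\pi,-]_S$ but
\begin{equation*}
\delta_k(P)=-[P,\pi]_S=(-1)^{k+1}[\pi,P]_S \qquad\text{for } P\in\mathfrak{X}^k(S),
\end{equation*}
as one sees already for $k=0$, where $\delta_0(f)(a)=\{a,f\}$ while $[\pi,f]_S(a)=\{f,a\}$. Carrying this degree-dependent sign through your two applications of \eqref{eqxx1} gives, under $\Delta(\pi)=0$,
\begin{equation*}
\delta(P)=\Delta(\pi\wedge P)-\pi\wedge\Delta(P),\qquad
\Delta\delta(P)=-\Delta(\pi\wedge\Delta(P))=-\delta\Delta(P),
\end{equation*}
i.e.\ anti-commutation rather than commutation; this agrees with the paper's computation and, as you correctly observe, serves equally well for descent (cocycles map to cocycles, coboundaries to coboundaries). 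The only other point worth making explicit is the degree-zero convention $\Delta|_{\mathfrak{X}^0(S)}=0$: you need it both to have $\Delta(1)=0$ (so that \eqref{eqxx1} really is the BV identity \eqref{bvdef}) and to apply \eqref{eqxx1} to $\Delta(P)$ when $m=1$; the paper handles the corresponding edge case separately with the remark that $\Delta\delta(P)=0$ for $P\in\mathfrak{X}^0(S)$.
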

\begin{proof} First of all, we show that the operator $\Delta$ commutes with the coboundary operator $\delta$ on the cochain complex $(\mathfrak{X}^\bullet(S),\delta_\bullet)$. It is clear that $\Delta\delta(P)=0$ for $P\in \mathfrak{X}^0(S)$. Now, for each $P\in \mathfrak{X}^m(S)(m\geq 1)$, we claim that
$(\delta\Delta+\Delta\delta)(P)=0$. Indeed, for any $a_0, \cdots, a_{m-1}, x\in S$, we have:
\begin{align*} \langle \Delta\delta(P)&(a_0, \cdots, a_{m-1}), x\rangle\\
=&(-1)^{m}\langle\delta(P)(a_0, \cdots, a_{m-1}, x),1\rangle\\
=&(-1)^{m}\sum_{0\leq i\leq m-1}(-1)^i\langle\{a_i, P(a_0, \cdots,\widehat{ a_i},  \cdots, a_{m-1}, x)\}, 1\rangle\\
&+(-1)^{m}(-1)^m\langle\{x, P(a_0, \cdots, a_{m-1}, x)\}, 1\rangle\\
&+(-1)^{m}\sum_{0\leq i< j\leq m-1}(-1)^{i+j}\langle P(\{a_i, a_j\}, a_0, \cdots, \widehat{a_i}, \cdots\widehat{a_j}, \cdots, a_{m-1}, x), 1\rangle\\
&+(-1)^{m}\sum_{0\leq i\leq m-1}(-1)^{i+m}\langle P(\{a_i, x\}, a_0, \cdots, \widehat{a_i}, a_{m-1}), 1\rangle.
\end{align*}
The first two terms of the above sum are equal to zero because of the identity in Proposition \ref{uni} (2).
On the other hand, we have:
\begin{align*} \langle \delta\Delta(P)&(a_0, \cdots, a_{m-1}), x\rangle\\
=&\sum_{0\leq i\leq m-1}(-1)^i\langle \{a_i, \Delta(P)(a_0, \cdots,\widehat{ a_i},  \cdots, a_{m-1})\}, x\rangle\\
&+\sum_{0\leq i< j\leq m-1}(-1)^{i+j}\langle \Delta(P)(\{a_i, a_j\}, a_0, \cdots, \widehat{a_i}, \cdots\widehat{a_j}, \cdots, a_{m-1}), x\rangle\\
=&\sum_{0\leq i\leq m-1}(-1)^i\langle \Delta(P)(a_0, \cdots,\widehat{ a_i},  \cdots, a_{m-1}), \{x, a_i\}\rangle \quad\text{by \,\,Proposition} \,\ref{uni} (2)\\
&+\sum_{0\leq i< j\leq m-1}(-1)^{i+j}(-1)^{m-1}\langle P(\{a_i, a_j\}, a_0, \cdots, \widehat{a_i}, \cdots\widehat{a_j}, \cdots, a_{m-1}, x), 1\rangle.
\end{align*}
It follows that:
$$\langle \Delta\delta(P)(a_0, \cdots, a_{m-1}), x\rangle=-\langle \delta\Delta(P)(a_0, \cdots, a_{m-1}), x\rangle.$$
Thus, by the non-degeneracy of the bilinear form, one obtains $\Delta\delta(P)=-\delta\Delta(P)$.
Therefore, $\Delta$ induces an operator on the Poisson cohomology $\HP^\bullet(S)$. The rest is clear
in view of Theorem \ref{combv}.
\end{proof}

\section*{Acknowledgment}
This work is supported by Natural Science Foundation of China \#11201299 and by an FWO grant.

\bibliography{}

\end{document}